\newtheorem{Th}{Theorem}[section]
\newtheorem{Def}[Th]{Definition}
\newcommand{\R}{\mathbb{R}}
\newcommand{\cC}{{\mathcal C}}
\newcommand{\cH}{{\mathcal H}}
\newcommand{\weakto}{\rightharpoonup}
\numberwithin{equation}{section}
\newcommand{\supp}{\mathrm{supp}\,}
\newcommand{\loc}{\mathrm{loc}}
\newcommand{\rad}{\mathrm{rad}}
\newcommand{\JX}{J_{\mid X}}
\newcommand{\vertiii}[1]{{\left\vert\kern-0.25ex\left\vert\kern-0.25ex\left\vert #1 
    \right\vert\kern-0.25ex\right\vert\kern-0.25ex\right\vert}}
\title[Multiplicity of solutions for nonlinear scalar field equations]{Note on the multiplicity of solutions for nonlinear scalar field equations with a critical inverse-square potential}
\author{Bartosz Bieganowski, Daniel Strzelecki}
\address[B. Bieganowski]{\newline\indent
	Faculty of Mathematics, Informatics and Mechanics, \newline\indent
	University of Warsaw, \newline\indent
	ul. Banacha 2, 02-097 Warsaw, Poland}
\email{\href{mailto:bartoszb@mimuw.edu.pl}{bartoszb@mimuw.edu.pl}}
\address[D. Strzelecki]{\newline\indent
	Faculty of Mathematics, Informatics and Mechanics, \newline\indent
	University of Warsaw, \newline\indent
	ul. Banacha 2, 02-097 Warsaw, Poland}
\email{\href{mailto:dstrzelecki@mimuw.edu.pl}{dstrzelecki@mimuw.edu.pl}}
\date{}
\begin{document}

\begin{abstract}
We are interested in the multiplicity of solutions to the following scalar field equation
$$
-\Delta u - \frac{(N-2)^2}{4|x|^2} u  = g(u), \quad \mbox{in } \mathbb{R}^N \setminus \{0\}.
$$
We establish the existence of infinitely many radial and non-radial solutions.

\medskip

\noindent \textbf{Keywords:} variational methods, singular potential, scalar field equation, critical Hardy potential
   
\noindent \textbf{AMS Subject Classification:} 35Q99, 35J10, 35J20, 58E99
\end{abstract}

\maketitle

\section{Introduction}

We study the following scalar field equation
\begin{equation}\label{eq:main}
-\Delta u - \frac{(N-2)^2}{4|x|^2} u  = g(u),  \quad \mbox{in } \R^N \setminus \{0\},
\end{equation}
where $N \geq 3$, and we are interested in the multiplicity of solutions. 
Equation \eqref{eq:main} involves the inverse-square Hardy potential with the optimal constant, which places the problem at the critical threshold of the Hardy inequality and leads to substantial analytical difficulties.

The nonlinear scalar field equation
$$
-\Delta u = g(u)
$$
is well understood, starting from the seminal works of Berestycki and Lions \cite{BL, BL2}, who established existence results under general assumptions on $g$, including the celebrated Berestycki-Lions conditions. 
Further developments addressed qualitative properties of solutions as well as symmetry and uniqueness issues.
Later, the existence and multiplicity of nonradial solutions were obtained in \cite{JeanjeanLu} and \cite{MR4173560}, among others. 

The literature is far more sparse regarding the critical inverse-square Hardy potential
$- \frac{(N-2)^2}{4|x|^2}$ is present with the optimal constant, usually referred to as the \emph{critical Hardy potential}. 
To the best of our knowledge, the first contribution addressing the existence of solutions for the pure power nonlinearity $g(u) = |u|^{p-2}u$, $2 < p < 2^* := \frac{2N}{N-2}$, is due to Mukherjee et al.\ \cite{Mukherjee}, who obtained existence results by exploiting a Hardy-Gagliardo-Nirenberg inequality and minimizing the associated Weinstein functional. The inverse-square potential $V(x) = -\frac{\mu}{|x|^2}$ falls into the class of the so-called \textit{singular transitional potentials} \cite{RevModPhys.43.36} and it is studied by many authors from the mathematical and physical point of view \cite{PhysRev.80.797, refId0, MR3098947, GuoMederski, PhysRevA.76.032112, narnhofer_2024_pxv87-rfh51, Deng, LiLiTang}.

One of the main difficulties in studying \eqref{eq:main} is the lack of coercivity of the quadratic form on $H^1(\R^N)$ generated by the operator
$-\Delta - \frac{(N-2)^2}{4|x|^2}$. 
As a consequence, the natural energy space associated with \eqref{eq:main} is strictly larger than $H^1(\R^N)$ and cannot be characterized as a standard Sobolev space.
Such a space was introduced independently in \cite{Mukherjee, Suzuki, TrachanasZographopoulos}, where some of its fundamental properties were established.

A major additional difficulty is that this energy space is not translation invariant, which renders the classical concentration-compactness principle of Lions inapplicable in a direct way. 
In a recent contribution \cite{BiegStrz}, the functional framework was refined and a suitable version of the concentration-compactness approach was developed, leading to the existence of a nontrivial weak solution to \eqref{eq:main}. 
It is also shown therein that working in the enlarged energy space is essential: if a solution is sufficiently regular (for instance, nonnegative and of class $\cC^2$), then it does not belong to $H^1(\R^N)$, cf. \cite{Smets}.

The solution constructed in \cite{BiegStrz} is obtained as a minimizer of the energy functional on the Poho\v{z}aev manifold and is radial provided that $g$ is odd. 
However, due to the lack of full regularity theory in the critical Hardy setting, it is not known whether all weak solutions satisfy the Poho\v{z}aev identity.
It is also shown in \cite{BiegStrz} that equation \eqref{eq:main} admits at least one nonradial solution.

In the present paper, we establish the multiplicity of both radial and nonradial solutions to \eqref{eq:main}. 
Our approach builds upon the functional framework developed in \cite{BiegStrz} and exploits the additional symmetry induced by the oddness of the nonlinearity, by the application of the abstract critical point theorem due to Ikoma \cite{Ikoma}.
Accordingly, we work under the same assumptions as in \cite{BiegStrz}, with the additional requirement that the right-hand side is odd.

We impose the following general conditions on the nonlinearity $g$, already introduced in \cite{BiegStrz}:
\begin{itemize}
\item[(G1)] $g : \R \rightarrow \R$ is continuous and odd;
\item[(G2)] $-\infty < \liminf_{s \to 0}  \frac{g(s)}{s} \leq \limsup_{s \to 0} \frac{g(s)}{s} = -\omega < 0$;
\item[(G3)] $\lim_{|s| \to \infty} \frac{g(s)}{|s|^{p-1}} = 0$ and $\lim_{|s| \to \infty} g(s)s \geq 0$ for some $p \in \left( 2,2^* \right)$;
\item[(G4)] there exists $\zeta_0 > 0$ such that $G(\zeta_0) > 0$, where $G(s) := \int_0^{s} g(\tau) \, d\tau$;
\item[(G5)] there exist $\nu > 2$ and $\gamma_0 < 0$ such that
\[
G(s) - \frac{1}{\nu} g(s)s \leq \gamma_0 s^2 \quad \text{for all } s\in\R.
\]
\end{itemize}

Under these assumptions, the solutions of \eqref{eq:main} can be interpreted as the so-called \textit{standing waves} $\Psi(t,x) = e^{i \omega t} u(x)$ of the nonlinear Schr\"odinger equation with a critical inverse-square potential
$$
i \partial_t \Psi = -\Delta \Psi - \frac{(N-2)^2}{4 |x|^2} \Psi - f(|\Psi|)\Psi, \quad (t,x) \in \R \times \R^N,
$$
where $g(u) = \omega u + f(|u|)u$. The critical constant $\mu_c := \frac{(N-2)^2}{4}$ represents the threshold for the coerciveness of the quadratic form associated with the Hamiltonian $\cH_\mu := -\Delta - \frac{\mu}{|x|^2}$. More precisely, if $\mu < \mu_c$, the corresponding quadratic form is coercive on the Sobolev space $H^1(\R^N)$ and power functions $\psi_{\pm} = |x|^{\frac{-(N-2) \pm \sqrt{(N-2)^2 -4 \mu}}{2}}$ belonging to $L^2_{\loc} (\R^N)$ satisfy $\cH_\mu \psi_{\pm} = 0$. If $\mu = \mu_c$, we have $\cH_{\mu_c} \left( |x|^{-\frac{N-2}{2}} \right) = 0$ and $\cH_{\mu_c} \left( |x|^{-\frac{N-2}{2}} \ln |x| \right) = 0$. In contrast, when $\mu > \mu_c$, coerciveness is lost, $\sqrt{(N-2)^2 -4 \mu}$ becomes imaginary and the system exhibits the so-called \textit{fall to the center} phenomenon, describing a quantum-mechanical instability driven by the strong singular attraction at the origin (see e.g. \cite{Plestid2018,Derezinski}).

The paper is organized as follows. In Section \ref{sect:2}, we introduce the functional and variational framework, including the definition of the energy space and a discussion of its main properties, following \cite{BiegStrz} and the abstract critical point theory developed in \cite{Ikoma}. In Section \ref{sect:3}, we state the main result and provide its proof.

\section{Functional and variational setting}\label{sect:2}

\subsection{The \texorpdfstring{$X^1$}{X1} space}

Recall the classical Hardy inequality
\begin{equation} \label{ineq:Hardy}
    \int_{\R^N} \frac{u^2}{|x|^2} \, dx \leq \frac{4}{(N-2)^2} \int_{\R^N} |\nabla u|^2 \, dx, \quad u \in H^1 (\R^N).
\end{equation}
This inequality implies that the functional
\begin{equation}\label{normX1-oryg}
\| u \|^2 := \int_{\R^N} |\nabla u|^2 \, dx - \frac{(N-2)^2}{4} \int_{\R^N} \frac{u^2}{|x|^2} \, dx + \int_{\R^N} u^2 \, dx, \quad u \in H^1 (\R^N),
\end{equation}
defines a norm on $H^1 (\R^N)$. However, due to the optimality of the constant $\frac{(N-2)^2}{4}$ and the fact that minimizers of \eqref{ineq:Hardy} do not belong to $H^1(\R^N)$, this norm is not complete in $H^1 (\R^N)$. To work within a Hilbert space framework, we consider $X^1 (\R^N)$ as the completion of $H^1 (\R^N)$ with respect to \eqref{normX1-oryg}. This space has been investigated in \cite{Suzuki, Mukherjee, TrachanasZographopoulos}.
According to \cite[Theorem 1.2]{Frank} (see also \cite{Suzuki}), we have the continuous embeddings
\begin{equation}\label{embedHs}
H^1 (\R^N) \subset X^1 (\R^N) \subset H^s(\R^N)
\end{equation}
for any $s \in [0, 1)$, where $H^0 (\R^N) := L^2 (\R^N)$. Consequently,
\begin{equation}\label{eq:embeddings}
    X^1 (\R^N) \subset L^t (\R^N)
\end{equation}
is continuous for $t \in [2, 2^*)$ and locally compact (specifically, the embedding $X^1 (\R^N) \subset L^t_{\loc} (\R^N)$ is compact).

For functions $u \in X^1 (\R^N) \setminus H^1 (\R^N)$, the first two integrals in \eqref{normX1-oryg} are not well-defined. As discussed in \cite{TrachanasZographopoulos, VazquezZ}, the inner product structure on $X^1 (\R^N)$ requires a generalized characterization. We introduce the singular bilinear form $\xi(\cdot, \cdot)$ defined by
\begin{equation}\label{xi:uv}
    \xi(u, v) := \lim_{\varepsilon \to 0+} \left( \int_{|x| > \varepsilon} \nabla u \nabla v \, dx - \frac{(N-2)^2}{4} \int_{|x| > \varepsilon} \frac{u v}{|x|^2} \, dx - \frac{N-2}{2} \varepsilon^{-1} \int_{|x|=\varepsilon} u v \, dS \right).
\end{equation}
We also denote $\xi(u) := \xi(u, u)$. The scalar product $\langle \cdot, \cdot \rangle$ and the norm on $X^1 (\R^N)$ are then given by
\begin{equation}\label{normX1}
    \langle u,v \rangle = \xi(u, v) + \int_{\R^N} uv \, dx, \quad
    \|u\|^2 = \xi(u) + \int_{\R^N} u^2 \, dx.
\end{equation}
Note that for $u \in H^1 (\R^N)$, \eqref{normX1} reduces to the standard formula \eqref{normX1-oryg}.

\subsection{Symmetry classes.}

To study nonradial solutions, we assume $N\geq 4$ and $N \neq 5$. Let $2 \leq M \leq \frac{N}{2}$ such that $N-2M\neq 1$. We decompose $\R^N = \R^M \times \R^M \times \R^{N-2M}$ and write points as $x=(x_1, x_2, x_3)$ with $x_1, x_2 \in \R^M$ and $x_3 \in \R^{N-2M}$. 
We introduce the involution $\tau : \R^N \rightarrow \R^N$ given by
$$
\tau(x_1, x_2, x_3) := (x_2, x_1, x_3),
$$
and the corresponding subspace of antisymmetric functions
\begin{equation}\label{eq:Xtau}
    X_\tau := \left\{ u \in X^1 (\R^N) : u(x) = - u(\tau x) \text{ for a.e. } x \in \R^N \right\}.
\end{equation}
Observe that $X_\tau$ contains no nontrivial radially symmetric functions.
Additionally, let $\mathcal{O}_2 := \mathcal{O}(M) \times \mathcal{O}(M) \times \mathcal{O}(N-2M) \subset \mathcal{O}(N)$ and let $X^1_{\mathcal{O}_2} (\R^N)\subset X^1(\R^N)$ denote the subspace of $\mathcal{O}_2$-invariant functions.

To look for radially symmetric solutions, we define $X^1_\rad(\R^N)\subset X^1(\R^N)$ as the subspace of radially symmetric functions, as long as $N \geq 3$.

In what follows, we define the working space $X$ as
$$
X := \left\{ \begin{array}{ll}
X^1_{\rad} (\R^N)      &  \text{(radial case: $N\geq 3$),}\\
X^1_{\mathcal{O}_2} (\R^N) \cap X_\tau      &  \text{(nonradial case: $N \geq 4$, $N \neq 5$).}
\end{array} \right.
$$
Combining the continuous embeddings \eqref{embedHs} with compactness results from \cite[Theorems II.1 and III.3]{Lions4}, we conclude that the embedding
$$
X \subset L^p (\R^N), \quad p \in (2,2^*)
$$
is compact.

\subsection{Variational setting}
We define the energy functional $J : X^1 (\R^N) \rightarrow \R$ by
$$
J(u) = \frac12 \xi(u) - \int_{\R^N} G(u) \, dx.
$$
Under assumptions (G1)--(G3) and in view of the embeddings \eqref{eq:embeddings}, the functional $J$ is of class $\cC^1$ and its derivative is given by
$$
J'(u)(v) = \xi(u,v) - \int_{\R^N} g(u) v \, dx, \quad u,v \in X^1(\R^N).
$$
It is established that any critical point $u \in X^1 (\R^N)$ of $J$ is a weak solution to \eqref{eq:main} on $\R^N \setminus \{0\}$ (see \cite{BiegStrz}).

We introduce the Poho\v{z}aev functional $P : X^1 (\R^N) \rightarrow \R$ defined as
\[
    P(u) = \xi(u) - 2^* \int_{\R^N} G(u) \, dx.
\]
Due to the presence of singular terms in $\xi(u)$, it remains an open question whether every critical point $u \in X^1(\R^N)$ of $J$ satisfies the identity $P(u)=0$. Nevertheless, we will search specifically for solutions that satisfy this condition.

\begin{Def}\label{DEf:PPS}
Let $c \in \R$. We say that $J:X\to \R$ satisfies $(\mathrm{PPS})_c$ if every sequence $(u_n)\subset X$ such that 
$$
J(u_n) \to c, \quad J'(u_n) \to 0, \quad P(u_n) \to 0,
$$
has a convergent subsequence.
\end{Def}

Let us define the scaling action $\Phi : \R \times X^1 (\R^N) \rightarrow X^1 (\R^N)$ by
$$
\Phi(t, u)(x) := u(e^{-t} x).
$$
A direct computation shows that
$$
J(\Phi(t, u)) = \frac{e^{(N-2)t}}{2} \xi(u) - e^{Nt} \int_{\R^N} G(u) \, dx,
$$
and the map $J \circ \Phi$ is of class $\cC^1 (\R \times X^{1}(\R^N))$.

In this framework, the abstract critical point theorem by Ikoma \cite[Theorem 2.4]{Ikoma} takes the following form.

\begin{Th}\label{T:Ikoma}
  Suppose that $\JX \in \cC^1(X, \R)$ is an even functional, i.e., $\JX(-u) = \JX(u)$ for all $u \in X$. Assume further that:
\begin{enumerate}
    \item[(i)] $\JX$ satisfies the $(\mathrm{PPS})_c$ condition for each $c > 0$;
    \item[(ii)] there exists $\rho_0 > 0$ such that $\inf_{\|u\|=\rho_0} \JX(u) > 0 = J(0)$;
    \item[(iii)] for each $k \geq 1$, there exists an odd map $\gamma_k \in \cC( S^{k-1}, X)$ such that 
    $$
    \max_{\sigma \in S^{k-1}} J(\gamma_k(\sigma)) < 0 
    \quad \text{and} \quad 
    \gamma_k(S^{k-1}) \subset X \setminus \overline{B_{\rho_0}(0)},
    $$
    where $S^{k-1} := \{ \sigma \in \R^k : |\sigma| = 1 \}$.
\end{enumerate}
Then there exists a sequence $(u_j)_{j=1}^\infty \subset X$ such that $(\JX)'(u_j) = 0$, $P(u_j)=0$, and $J(u_j) = c_j \to \infty$ as $j \to \infty$.
\end{Th}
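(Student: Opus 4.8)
\emph{Proof proposal.} The plan is to lift the problem to the augmented space $\R \times X$ and reduce it to a symmetric (i.e.\ $\Z_2$-equivariant) minimax scheme, following the scaling-variable device underlying \cite{Ikoma}. Define $\wt{J} : \R \times X \to \R$ by $\wt{J}(t,u) := J(\Phi(t,u))$; by the computation recorded above it is of class $\cC^1$, and since $G$ is even by (G1) it is even in the second variable, $\wt{J}(t,-u)=\wt{J}(t,u)$. Note also that $\Phi(t,\cdot)$ maps $X$ into $X$, because the dilation commutes with radial symmetry and with the actions of $\mathcal{O}_2$ and $\tau$. A direct differentiation gives
$$
\partial_t\wt{J}(t,u) = \tfrac{N-2}{2}\,P(\Phi(t,u)), \qquad \partial_u\wt{J}(t,u)[\phi] = J'(\Phi(t,u))[\Phi(t,\phi)].
$$
Since $\Phi(t,\cdot)$ is a linear isomorphism of $X$ with inverse $\Phi(-t,\cdot)$, the second identity shows that $\partial_u\wt{J}(t,u)=0$ forces $J'(\Phi(t,u))=0$, while the first shows $\partial_t\wt{J}(t,u)=0$ forces $P(\Phi(t,u))=0$. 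Hence every critical point $(t_0,u_0)$ of $\wt{J}$ produces a genuine solution $v_0:=\Phi(t_0,u_0)$ of \eqref{eq:main} with $P(v_0)=0$ and $J(v_0)=\wt{J}(t_0,u_0)$. The whole strategy is thus to manufacture infinitely many critical points of $\wt{J}$ with diverging critical values.

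For the minimax construction I would work with the Krasnoselskii genus $\mathrm{gen}(\cdot)$ of closed sets in $X$ that are symmetric under $u\mapsto -u$. Since $\wt{J}(0,u)=\JX(u)$, hypothesis (ii) shows that the slice $\{0\}\times\{\|u\|=\rho_0\}$ carries the separating sphere, on which $\wt{J}\ge \inf_{\|u\|=\rho_0}\JX>0$. Using (iii), for each $k$ the odd map $\sigma\mapsto(0,\gamma_k(\sigma))$ has $\wt{J}<0$ on $S^{k-1}$ and takes values outside $\{0\}\times \ov{B_{\rho_0}(0)}$; building from it a $k$-dimensional symmetric set yields exactly the data needed to define the genus-based minimax values
$$
c_k := \inf_{A\in\Gamma_k}\ \max_{w\in A}\wt{J}(w),
$$
where $\Gamma_k$ is the usual family of symmetric sets of genus at least $k$ linking the sphere and compatible with the maps $\gamma_k$. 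A standard linking argument based on (ii) gives $c_k\ge \inf_{\|u\|=\rho_0}\JX>0$, and the monotonicity $c_1\le c_2\le\cdots$ together with the compactness of the embedding $X\subset L^p(\R^N)$ yields $c_k\to\infty$, the latter being the usual consequence of the fact that a bounded set of critical points has finite genus, once the Palais--Smale condition below is in force.

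The heart of the matter, and the step I expect to be the main obstacle, is verifying that $\wt{J}$ satisfies the Palais--Smale condition at each level $c>0$, because the scaling direction $t$ is genuinely non-compact: a priori a Palais--Smale sequence $(t_n,w_n)$ may run off to $t_n\to\pm\infty$ while $w_n$ degenerates. Given such a sequence, the two scalar relations $\wt{J}(t_n,w_n)\to c$ and $\partial_t\wt{J}(t_n,w_n)\to0$ combine (eliminating $\int G$, using $\tfrac12-\tfrac{1}{2^*}=\tfrac1N$) to give $\xi(v_n)\to Nc>0$ for $v_n:=\Phi(t_n,w_n)$, so the rescaled functions carry a definite amount of energy. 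The crucial point is to show that the dilation parameters $t_n$ stay bounded: I would argue that $t_n\to\pm\infty$ is incompatible with $\partial_u\wt{J}(t_n,w_n)\to0$ at a positive level, since $\Phi(t_n,\cdot)$ is not an isometry and the distortion of the $X^*$-norm it induces would force $v_n\to 0$, contradicting $\xi(v_n)\to Nc>0$; the quantitative control needed here is supplied by (G2) and (G5), which also bound $v_n$ in $X$. Once $(t_n)$ is bounded, the identity $\partial_u\wt{J}(t_n,w_n)[\phi]=J'(v_n)[\Phi(t_n,\phi)]$ together with the uniform boundedness of the operators $\Phi(\pm t_n,\cdot)$ upgrades $\partial_u\wt{J}(t_n,w_n)\to0$ to $J'(v_n)\to0$ in $X^*$, while $\partial_t\wt{J}(t_n,w_n)\to0$ gives $P(v_n)\to 0$. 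Thus $(v_n)$ is precisely a $(\mathrm{PPS})_c$ sequence for $J$, so hypothesis (i) delivers a subsequence $v_n\to v_0$ in $X$; passing back through the continuous map $\Phi$ with $t_n\to t_0$ recovers $(t_n,w_n)\to(t_0,w_0)$, establishing the Palais--Smale condition for $\wt{J}$.

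With the geometry and the compactness in hand, the $\Z_2$-equivariant deformation lemma and the genus-based symmetric mountain pass theorem of Rabinowitz type apply to the even functional $\wt{J}$ on $\R\times X$, showing that each $c_k$ is a critical value and that there is a sequence of critical points $(t_k,w_k)$ with $\wt{J}(t_k,w_k)=c_k\to\infty$. Setting $u_j:=\Phi(t_j,w_j)$ (after relabelling) and invoking the correspondence established in the first step yields $(\JX)'(u_j)=0$, $P(u_j)=0$ and $J(u_j)=c_j\to\infty$, which is the assertion of the theorem.
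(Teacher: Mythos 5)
First, a framing remark: the paper does not prove this statement at all --- it is quoted as the specialization of \cite[Theorem 2.4]{Ikoma} to the present setting --- so your attempt has to be measured against Ikoma's proof. Your overall scheme (the augmented functional $\wt{J}(t,u)=J(\Phi(t,u))$ on $\R\times X$, the identities $\partial_t\wt{J}(t,u)=\frac{N-2}{2}P(\Phi(t,u))$ and $\partial_u\wt{J}(t,u)[\phi]=J'(\Phi(t,u))[\Phi(t,\phi)]$, the computation $\xi(v_n)\to Nc$, and the genus-based symmetric minimax) is indeed the strategy behind that theorem. But there is a genuine gap at exactly the step you call the heart of the matter: the standard Palais--Smale condition for $\wt{J}$ on $\R\times X$ is not merely hard to verify --- it is \emph{false} at precisely the levels the theorem produces. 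If $v\in X$ is any critical point of $\JX$ with $P(v)=0$ and $J(v)=c>0$ (the theorem asserts infinitely many such), then for every $\theta\in\R$ the point $(\theta,\Phi(-\theta,v))$ is a critical point of $\wt{J}$ at level $c$: indeed $\partial_t\wt{J}(\theta,\Phi(-\theta,v))=\frac{N-2}{2}P(v)=0$ and $\partial_u\wt{J}(\theta,\Phi(-\theta,v))[\phi]=J'(v)[\Phi(\theta,\phi)]=0$. Hence $(n,\Phi(-n,v))_{n\in\N}$ is a Palais--Smale sequence (with derivative identically zero) at the positive level $c$ with no convergent subsequence, since the first coordinate diverges. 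So no argument can rule out $t_n\to\pm\infty$, and your proposed mechanism also fails quantitatively: since $\|\Phi(-t,\psi)\|^2=e^{-(N-2)t}\xi(\psi)+e^{-Nt}\|\psi\|_{L^2(\R^N)}^2$, for $t_n\to+\infty$ the operator norms of $\Phi(-t_n,\cdot)$ tend to $0$, so $\partial_u\wt{J}(t_n,w_n)\to 0$ yields $J'(v_n)\to0$ with no contradiction whatsoever with $\xi(v_n)\to Nc$ (in the orbit example above $v_n\equiv v$ never degenerates), while for $t_n\to-\infty$ those operator norms blow up and one cannot even deduce $J'(v_n)\to 0$ in the standard dual norm.

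The repair --- and what Ikoma actually does --- is to forgo the Palais--Smale condition for $\wt{J}$ altogether and to prove an equivariant deformation theorem directly from the $(\mathrm{PPS})_c$ hypothesis. One measures the derivative of $\wt{J}$ in a $t$-dependent metric on $\R\times X$ (of the type $\|(\alpha,h)\|_{(t,u)}^2=|\alpha|^2+\|\Phi(t,h)\|^2$), with respect to which a Palais--Smale sequence $(t_n,w_n)$ yields exactly $J(v_n)\to c$, $P(v_n)\to 0$ and $J'(v_n)\to 0$ for $v_n:=\Phi(t_n,w_n)$, i.e.\ a $(\mathrm{PPS})_c$ sequence; hypothesis (i) then gives convergence of the rescaled functions $v_n$ \emph{only}, not of the pairs $(t_n,w_n)$, and the deformation and its neighborhoods must be built around the non-compact critical orbits $\{(\theta,\Phi(-\theta,v)):\theta\in\R\}$ rather than around points. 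With such a deformation lemma in hand, the genus minimax and the geometry you correctly extracted from (ii)--(iii) do go through, and $c_k\to\infty$ follows from the standard finite-genus argument you allude to (your parenthetical appeal to compactness of $X\subset L^p(\R^N)$ for this is a non sequitur, but harmless since you then invoke the genus reasoning). In short: the architecture of your proposal matches Ikoma's, the derivative identities and the reduction of critical points of $\wt{J}$ to solutions with $P=0$ are correct, but the compactness step must be restructured as above; as stated, the claim you set out to prove there contradicts the conclusion of the very theorem being proved.
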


Although the theorem provides critical points of the restricted functional $\JX$, by the principle of symmetric criticality (see \cite{Palais}), these points are also critical points of the unconstrained functional $J$.

\section{Main result}\label{sect:3}

\begin{Th}
Suppose that (G1)--(G5) hold. Then, 
\begin{itemize}
    \item [(i)] if $N\geq 3 $, there exists a sequence of radial weak solutions $(u_j)_{j=1}^\infty \subset X$ of \eqref{eq:main},
    \item [(ii)] if $N=4$ or $N\geq 6$, there exists a sequence of non-radial weak solutions $(u_j)_{j=1}^\infty \subset X$ of \eqref{eq:main}.
\end{itemize}
Moreover, in both cases, $u_j \in C^{1,\alpha}_{\loc} (\R^N \setminus \{0\})$ for some $\alpha \in (0,1)$, $P(u_j) = 0$ and $J(u_j) \to \infty$ as $j \to \infty$. If, in addition, $g$ is H\"older continuous, $u_j \in C^2 (\R^N \setminus \{0\})$.
\end{Th}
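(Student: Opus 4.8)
The plan is to verify the three hypotheses of Ikoma's abstract theorem (Theorem~\ref{T:Ikoma}) for $\JX$ on the symmetric space $X$ and then to promote the resulting critical points of $\JX$ to genuine weak solutions of \eqref{eq:main}. Evenness of $\JX$ is immediate: by (G1) the primitive $G$ is even and $\xi(-u)=\xi(u)$, so $J(-u)=J(u)$, and since $\Phi(t,\cdot)$ is linear the restriction to $X$ stays even. Throughout I would exploit the decomposition $J(u)=\tfrac12\bigl(\xi(u)+\omega\int_{\R^N}u^2\bigr)-\int_{\R^N}\Psi(u)$, where $\Psi(s):=G(s)+\tfrac{\omega}{2}s^2$ has derivative $\psi(s)=g(s)+\omega s$ satisfying $\psi(s)=o(|s|)$ as $s\to0$ by (G2) and $\psi(s)=o(|s|^{p-1})$ as $|s|\to\infty$ by (G3). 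Since $\xi\ge0$ on $X^1(\R^N)$ (Hardy inequality) and $\omega>0$, the quadratic part $\|u\|_\omega^2:=\xi(u)+\omega\int u^2$ is an equivalent norm on $X$, and this shift of the mass term is the device that makes both the geometry and the compactness work.

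For hypothesis (ii) I would estimate $\int\Psi(u)\le \epsilon\|u\|_{L^2}^2+C_\epsilon\|u\|_{L^p}^p$ from the growth of $\psi$, so that by the continuous embeddings $X\subset L^2\cap L^p$ one gets $J(u)\ge c_0\|u\|^2-C'\|u\|^p$ for $\epsilon$ small; as $p>2$ this is strictly positive on a small sphere $\|u\|=\rho_0$, while $J(0)=0$. For hypothesis (i), boundedness of a $(\mathrm{PPS})_c$ sequence follows from (G5) alone: since $J(u_n)-\tfrac1\nu J'(u_n)(u_n)\ge(\tfrac12-\tfrac1\nu)\xi(u_n)+|\gamma_0|\int u_n^2\ge c\|u_n\|^2$ (using $\nu>2$, $\gamma_0<0$, $\xi\ge0$), and the left-hand side is bounded by $C+o(1)\|u_n\|$, a uniform bound follows. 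Passing to a weak limit $u_n\rightharpoonup u_0$ in $X$ and using the compact embedding $X\subset L^p(\R^N)$ (valid for $p\in(2,2^*)$ on both symmetry classes), I would show $\int(\psi(u_n)-\psi(u_0))(u_n-u_0)\to0$ by splitting into the region of small values (controlled by $\epsilon$ and the $L^2$ bound) and of large values (controlled by $L^p$ convergence). Writing $g=\psi-\omega\,\mathrm{id}$ and testing $J'(u_n)-J'(u_0)$ against $u_n-u_0$ then yields $\xi(u_n-u_0)+\omega\|u_n-u_0\|_{L^2}^2=o(1)$, i.e. $\|u_n-u_0\|_\omega\to0$, whence strong convergence in $X$.

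For hypothesis (iii) the plan is to build, for each $k$, an odd continuous $\gamma_k$ by combining an equal-height plateau construction with the dilation flow $\Phi$. Using (G4), fix $\zeta_0$ with $G(\zeta_0)>0$ and, by continuity, $\delta>0$ with $G\ge\delta$ near $\zeta_0$. I would distribute a fixed total measure of plateau at height $\pm\zeta_0$ among $k$ disjoint regions, allocating to the $i$-th region a subregion of measure proportional to $\sigma_i^2$ and sign $\sgn(\sigma_i)$; since the total plateau measure $\sum_i\sigma_i^2$ is constant on $S^{k-1}$ and $G$ is even, the resulting $w_\sigma\in X$ satisfies $\int_{\R^N}G(w_\sigma)\ge c_k>0$ uniformly in $\sigma$, with smooth transitions thin enough not to spoil the sign. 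In the radial case the regions are concentric annuli away from the origin; in the nonradial case they are $\mathcal{O}_2$-orbits antisymmetrized through $\tau$ (height $+\zeta_0$ on a set, $-\zeta_0$ on its $\tau$-image), keeping $w_\sigma\in X^1_{\mathcal{O}_2}(\R^N)\cap X_\tau$. Setting $\gamma_k(\sigma):=\Phi(T,w_\sigma)$ and recalling $J(\Phi(t,u))=\tfrac{e^{(N-2)t}}{2}\xi(u)-e^{Nt}\int G(u)$, the dominant $e^{Nt}$ term drives $J(\gamma_k(\sigma))\to-\infty$ and $\|\gamma_k(\sigma)\|\to\infty$ uniformly on $S^{k-1}$ as $T\to\infty$, so for $T$ large both requirements of (iii) hold; oddness is inherited from $w_{-\sigma}=-w_\sigma$ and the linearity of $\Phi(T,\cdot)$.

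With (i)--(iii) in hand, Theorem~\ref{T:Ikoma} produces $(u_j)\subset X$ with $(\JX)'(u_j)=0$, $P(u_j)=0$ and $J(u_j)=c_j\to\infty$; by the principle of symmetric criticality \cite{Palais} each $u_j$ is a critical point of $J$ on $X^1(\R^N)$, hence a weak solution of \eqref{eq:main}, radial in case (i) and nonradial in case (ii) since $X_\tau$ contains no nontrivial radial function. Finally, away from the origin the operator $-\Delta-\tfrac{\mu_c}{|x|^2}$ has smooth coefficients and $g$ is subcritical, so a standard local elliptic bootstrap (as carried out for the single solution in \cite{BiegStrz}) gives $u_j\in \cC^{1,\alpha}_{\loc}(\R^N\setminus\{0\})$, upgraded to $\cC^2(\R^N\setminus\{0\})$ by Schauder estimates when $g$ is H\"older continuous. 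I expect the delicate points to be twofold: the compactness step in (i), where the absence of a compact $L^2$-embedding is overcome precisely by the mass shift $g\mapsto g+\omega\,\mathrm{id}$ together with $\xi\ge0$; and the uniform positivity of $\int G(w_\sigma)$ over the whole sphere in (iii), which forces the equal-height construction above rather than a naive linear map, since $\int G(tu)<0$ for small $t$ rules out scaling the amplitude.
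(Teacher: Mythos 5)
Your overall architecture coincides with the paper's: verify hypotheses (i)--(iii) of Theorem \ref{T:Ikoma} on the symmetric space $X$, use the mass shift $g \mapsto g + \omega\,\mathrm{id}$ together with $\xi \geq 0$ and the compact embedding $X \subset L^p(\R^N)$, $p \in (2,2^*)$, to prove $(\mathrm{PPS})_c$, obtain the small-sphere geometry (ii) from the standard estimate $J(u) \geq c_0\|u\|^2 - C'\|u\|^p$, produce (iii) by dilating compactly supported odd families with uniformly positive $G$-integral, and conclude via symmetric criticality and the regularity theory of \cite{BiegStrz}. Your direct derivations of the boundedness of $(\mathrm{PPS})_c$ sequences from (G5) (via $J(u_n) - \tfrac{1}{\nu}J'(u_n)(u_n)$) and of condition (ii) are correct and simply make explicit what the paper delegates to \cite[Lemmas 3.1 and 5.1]{BiegStrz}; likewise, your reading of $g(s)+\omega s = o(|s|)$ near $0$ technically uses a full limit where (G2) only asserts a $\limsup$, but the paper's own proof makes the identical estimate, so this is a shared subtlety rather than a divergence.

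The genuine gap is in (iii). The paper does not build the maps $\overline{\gamma}_k$ by hand: it invokes \cite[Theorem 10]{BL2} in the radial case and \cite[Lemma 4.2]{JeanjeanLu} in the nonradial case, and these are cited precisely because the construction is delicate. Your equal-height plateau scheme --- plateau measure proportional to $\sigma_i^2$ at height $\sgn(\sigma_i)\,\zeta_0$ --- is discontinuous in $X$ at points of $S^{k-1}$ where a coordinate vanishes. As $\sigma_i \to 0^{\pm}$ the $i$-th piece shrinks in measure but keeps amplitude $\zeta_0$: with transition layers of fixed thickness the piece converges to a nonzero tent-type profile of sign $\pm$, so the two one-sided limits differ from each other and from $0$; with transitions ``thin enough not to spoil the sign,'' the gradient-type part of the norm of the shrinking piece does not vanish and can blow up. Either way $\sigma \mapsto w_\sigma$ fails to be continuous into $X$, so your $\gamma_k$ is not an admissible competitor in Theorem \ref{T:Ikoma}(iii). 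The Berestycki--Lions construction resolves exactly this tension by tapering the amplitude of pieces attached to small $|\sigma_i|$ (so that each piece vanishes in $H^1$, hence in $X^1$, as $\sigma_i \to 0$) and recovering the uniform lower bound on $\int_{\R^N} G$ from the fact that some $\sigma_i^2 \geq 1/k$ on $S^{k-1}$; in the nonradial case, \cite[Lemma 4.2]{JeanjeanLu} additionally reconciles $\mathcal{O}_2$-invariance, $\tau$-antisymmetry (under the constraint $N-2M \neq 1$) with oddness in $\sigma$, which your orbit-antisymmetrization sketch does not address. The repair is simply to invoke (or reproduce) these two results, as the paper does; after that, your dilation argument, the choice of $\rho_0$, the application of \cite{Palais}, the nonradiality of elements of $X_\tau$, and the regularity bootstrap all agree with the paper's proof.
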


\begin{proof}
It is enough to verify assumptions (i)--(iii) of Theorem \ref{T:Ikoma}, while the regularity of solutions follows directly from \cite[Theorem 1.2 (c),(d)]{BiegStrz}.

To show (i) take $(u_n)\subset X$ such that $J(u_n) \to c > 0, \, J'(u_n) \to 0$, and $P(u_n) \to 0$. From \cite[Lemma 5.1]{BiegStrz}, $(u_n)$ is bounded in $X^1 (\R^N)$. Then $u_n\weakto u_0$ in $X^1 (\R^N)$, up to a subsequence. By the compactness of embeddings $u_n\to u_0$ in $L^p(\R^N)$, $p\in(2,2^*)$. Since $J$ is weak-to-weak* continuous, $J'(u_0) = 0$. Then
\begin{align*}
o(1) &= J'(u_n)(u_n-u_0) - J'(u_0)(u_n-u_0) = \xi(u_n - u_0) - \int_{\R^N} \left( g(u_n) - g(u_0) \right) (u_n - u_0) \, dx \\
&= \xi(u_n - u_0) +\omega \int_{\R^N} (u_n-u_0)^2 \, dx \\
&\quad - \int_{\R^N} \left( g(u_n) + \omega u_n \right) (u_n-u_0) \, dx + \int_{\R^N} \left( g(u_0) + \omega u_0 \right) (u_n - u_0) \, dx.
\end{align*}
Observe that, by (G1)--(G3), for every $\varepsilon > 0$ there is $C_\varepsilon>0$ such that
\begin{align*}
\left| \int_{\R^N} \left( g(u_n) + \omega u_n \right) (u_n-u_0) \, dx \right| &\leq \varepsilon \int_{\R^N} |u_n| |u_n-u_0| \, dx + C_\varepsilon \int_{\R^N} |u_n|^{p-1} |u_n-u_0| \, dx \\
&\lesssim \varepsilon + C_\varepsilon \|u_n - u_0\|_{L^p (\R^N)},
\end{align*}
where we used that $(u_n)$ is bounded in $L^2 (\R^N)$ and in $L^p(\R^N)$. Since $\|u_n - u_0\|_{L^p (\R^N)} \to 0$ and $\varepsilon > 0$ was arbitrary, we obtain that
$$
\int_{\R^N} \left( g(u_n) + \omega u_n \right) (u_n-u_0) \, dx \to 0.
$$
Similarly
$$
\int_{\R^N} \left( g(u_0) + \omega u_0 \right) (u_n-u_0) \, dx \to 0.
$$
Hence $\xi(u_n - u_0) + \omega \int_{\R^N} (u_n-u_0)^2 \, dx \to 0$, namely $u_n \to u_0$ in $X^1 (\R^N)$. Hence, $(\mathrm{PPS})_c$ holds.

Condition (ii) follows directly from \cite[Lemma 3.1]{BiegStrz}, and $\rho_0$ can be chosen arbitrarily small.

To show (iii), we claim that for every $k \geq 1$ there are a radius $R_k > 0$ and an odd map $\overline{\gamma}_k \in \cC (S^{k-1}, X)$ such that $\supp (\overline{\gamma}_k (\sigma)) \subset B(0,R_k)$, $\int_{\R^N} G \left( \overline{\gamma}_k (\sigma) \right) \, dx \geq 1$. In the radial setting it follows from \cite[Theorem 10]{BL2}, while for the nonradial setting it is a consequence of \cite[Lemma 4.2]{JeanjeanLu}. Then, we define $\gamma_k$ as $\gamma_k(\sigma)(x) = \overline{\gamma}_k (\sigma) (e^{-\theta_k} x)$, where $\theta_k > 0$ is to be chosen. Note that, then
\begin{align*}
J(\gamma_k(\sigma)) &= \frac{1}{2} \xi(\gamma_k(\sigma)) - \int_{\R^N} G(\gamma_k(\sigma)) \, dx = \frac12 e^{(N-2) \theta_k} \xi ( \overline{\gamma}_k (\sigma)) - e^{N\theta_k} \int_{\R^N} G( \overline{\gamma}_k (\sigma)) \, dx \\
&\leq \frac12 e^{(N-2) \theta_k} \xi ( \overline{\gamma}_k (\sigma)) - e^{N\theta_k}.
\end{align*}
Hence, for sufficiently large $\theta_k > 0$, we get $\max_{\sigma \in S^{k-1}} J(\gamma_k(\sigma)) < 0$. Choosing sufficiently small $\rho_0$ in (ii), we obtain also that $\gamma_k(S^{k-1}) \subset X \setminus \overline{B_{\rho_0}(0)}$ and (iii) is satisfied.
\end{proof}

\section*{Acknowledgements}
Bartosz Bieganowski and Daniel Strzelecki were partly supported by the National Science Centre, Poland (Grant No. 2022/47/D/ST1/00487).
\bibliographystyle{acm}
\bibliography{Bibliography}
\end{document}